\newtheorem{defi}{Definition}
\def\today{\ifcase\month\or
January\or February\or March\or April\or May\or June\or July\or August\or September\or October\or November\or December\fi
\space\number\day ,\number\year}
\newtheorem{Theorem}{Theorem}
\newtheorem{Lemma}{Lemma}
\newcommand{\bl}{\begin{Lemma}}
\newcommand{\el}{\end{Lemma}}
\newcommand{\be}{\begin{equation}}
\newcommand{\ee}{\end{equation}}
\newcommand{\bd}{\begin{defi}}
\newcommand{\ed}{\end{defi}}
\newtheorem{pro}{Proposition}
\newcommand{\bp}{\begin{pro}}
\newcommand{\ep}{\end{pro}}
\newcommand{\bt}{\begin{Theorem}}
\newcommand{\et}{\end{Theorem}}
\newtheorem{cor}{Corollary}
\newcommand{\bc}{\begin{cor}}
\newcommand{\ec}{\end{cor}}
\def\sqr#1#2{{\vcenter{\vbox{\hrule height.#2pt
\hbox{\vrule width.#2pt height#1pt \kern#1pt
\vrule width.#2pt}\hrule height.#2pt}}}}
\def\square{\mathchoice\sqr45\sqr45\sqr{2.1}3\sqr{1.5}3}
\newcommand{\qed}{\square}
\begin{document}
\setlength{\textheight}{7.7truein}    
\setcounter{page}{1} \centerline{\bf Isoparametric functions and harmonic unit vector fields}
\centerline{\bf in K-Contact Geometry} \centerline{\bf } \baselineskip=13pt
\vspace*{10pt} \centerline{\footnotesize {\bf Philippe Rukimbira}} \baselineskip=12pt \centerline{\footnotesize\it
Department of Mathematics \& Statistics, Florida International University}
\baselineskip=10pt \centerline{\footnotesize\it Miami, Florida
33199, USA} \baselineskip=10pt \centerline{\footnotesize E-MAIL:
rukim@fiu.edu} \vspace*{0.225truein}

\vspace*{0.21truein} \abstract{\it } {We provide some examples of harmonic unit vector fields as normalized gradients of isoparametric functions coming from a K-contact geometry setting.}

\vskip 12pt
\noindent{MSC: 57C15, 53C57}{}{}

\vspace*{14pt}                  

\baselineskip=24pt
\section*{Introduction}
in \cite{BVA}, the authors showed that given an isoparametric function $f$ on an Einstein manifold, the normalized gradient vector field $\frac{\nabla f}{\|\nabla f\|}$ is a harmonic unit vector field.
In this paper, without the Einstein assumption, we present explicit isoparametric functions on double K-contact structures and also show that their normalized gradient vector fields are harmonic unit vector fields.
We may ask whether or not there are examples of double K-contact structures on non-Einstein manifolds. The answer to this question is negative if K-contact is replaced by Sasakian. A compact, double Sasakian manifold is of constant curvature 1 (see \cite{DRR}, \cite{TAY}).
\section{Transnormal functions}

A smooth function $f$ on $M$ is said to be {\bf transnormal} if there exists a real, smooth function $$b:\bf{R}\to\bf{R}^+$$ such that \begin{equation}\|\nabla f\|^2=b(f),\label{11}\end{equation} where
$\nabla f$ is the gradient vector field of $f$.

The function $f$ is said to be {\bf isoparametric} if there is another continuous function $$a:\bf{R}\to\bf{R}$$ such that the Laplacian of $f$,
$\Delta f$ satisfies:

\begin{equation}\Delta f=a(f).\label{22}\end{equation}

\bp Let $f$ be a transnormal function on a Riemannian manifold $(M,g)$. Then $N=\frac{\nabla f}{\|\nabla f\|}$ is a geodesic unit vector field defined on the complement of the critical set of $f$.
\ep

\begin{proof} Let $H$ be any vector field such that $g(\nabla f,H)=df(H)=0$.
Then, using transnormality,
\begin{eqnarray*}g(\nabla_{\nabla f}{\nabla f},H)&=\nabla f g(\nabla f,H)-g(\nabla f,\nabla_{\nabla f}H)\\&=-g(\nabla f,\nabla_H\nabla f)-g(\nabla f,[\nabla f,H])\\
&=-\frac{1}{2}Hg(\nabla f,\nabla f)-g(\nabla f,[\nabla f,H])\\&=-\frac{1}{2}b'(f)g(\nabla f,H)-g(\nabla f,[\nabla f,H])\\
&=-g(\nabla f,[\nabla f,H]) \label{1}
\end{eqnarray*}
\begin{equation}g(\nabla_{\nabla f}\nabla f,H)=-g(\nabla f,[\nabla f,H])\label{7}\end{equation}

But also, the Lie derivative of $df$ satisfies: $$L_{\nabla f}df (H)=\nabla fdf(H)-df([\nabla f,H])=-df([\nabla f,H])=-g(\nabla f,[\nabla f,H])$$ and $$L_{\nabla f}df(H)=(di_{\nabla f}df)(H)=H\|\nabla f\|^2=b'(f)df(H)=0.$$ We deduce from (\ref{1}) that $$\nabla_{\nabla f}\nabla f=K\nabla f$$ for some function $K$
on $M$. Hence
\begin{eqnarray*}\nabla_NN&=&\frac{1}{\|\nabla f\|}\nabla_{\nabla f}\frac{\nabla f}{\|\nabla f\|}\\&=&[\frac{1}{\|\nabla f\|}\nabla f(\frac{1}{\|\nabla f\|})+\frac{K}{\|\nabla f\|^2}]\nabla f\end{eqnarray*}

Since $\nabla_NN$ is orthogonal to $N$, and therefore to $\nabla f$, it follows that $\nabla_NN=0$.
$\qed $

\end{proof}

 \begin{Lemma}\label{l1}If $f$ is a transnormal function on a Riemannian manifold $(M,g)$, with $\|\nabla f\|^2=b(f)$, then the mean curvature $h$ of every regular level surface satisfies:
$$h=\frac{\Delta f}{\|\nabla f\|}+\frac{b'(f)}{2\sqrt b}.$$\end{Lemma}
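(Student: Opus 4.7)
\medskip
\noindent\textbf{Proof plan.} My plan is to identify the mean curvature $h$ of a regular level surface $\Sigma_c=\{f=c\}$ with the divergence of the unit normal field $N=\nabla f/\|\nabla f\|$, and then to expand that divergence using the product rule, the transnormality condition $\|\nabla f\|^2=b(f)$ doing the rest of the work.

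First I would invoke the preceding proposition, which supplies $\nabla_N N=0$ on the regular set of $f$. Completing $N$ to an adapted orthonormal frame $\{e_1,\dots,e_{n-1},N\}$ with the $e_i$'s tangent to $\Sigma_c$, the relation $g(\nabla_N N, N)=0$ kills the $N$-contribution to $\mathrm{div}(N)$, so $\mathrm{div}(N)$ restricted to $\Sigma_c$ reduces to the trace of the shape operator $X\mapsto \nabla_X N$ on $T\Sigma_c$, and this trace is precisely the mean curvature $h$.

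Next I would write $N=\phi\,\nabla f$ with $\phi=b(f)^{-1/2}$ and apply the product rule $\mathrm{div}(\phi\,\nabla f)=\phi\,\Delta f + g(\nabla\phi,\nabla f)$. The first term on the right is already $\Delta f/\|\nabla f\|$, matching the first summand in the claimed identity. For the second, transnormality gives $\nabla\phi = -\frac{1}{2}\,b(f)^{-3/2}\,b'(f)\,\nabla f$, so pairing against $\nabla f$ turns the factor $\|\nabla f\|^2 = b(f)$ in the numerator into the desired $b'(f)/(2\sqrt{b})$ contribution.

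The actual manipulation is routine; the one delicate point I would pin down at the outset is the sign convention for the unit normal and hence for the mean curvature, so that the two contributions assemble with the signs displayed in the lemma. Once that convention is fixed, adding the two terms produces the stated formula immediately.
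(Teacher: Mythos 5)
Your computation is, in substance, the same as the paper's: the paper also writes $h$ as the full trace of $-\nabla N$ (adding the vanishing term $g(\nabla_N N,N)$), splits off the derivative of $\|\nabla f\|^{-1}$, and uses $N(\|\nabla f\|)=\tfrac{b'(f)}{2\sqrt b}\,N(f)$ with $N(f)=\|\nabla f\|$; your product rule $\mathrm{div}(\phi\,\nabla f)=\phi\,\mathrm{div}(\nabla f)+g(\nabla\phi,\nabla f)$ is just a tidier packaging of that frame computation. You do not need the geodesic property to kill the normal contribution, by the way: $g(\nabla_N N,N)=\tfrac12 N\|N\|^2=0$ for any unit field.

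The one point where your write-up would actually fail as stated is the sign bookkeeping, and your proposed remedy does not repair it. With your conventions $h=\mathrm{div}(N)$ and $\Delta=\mathrm{div}\,\mathrm{grad}$, the computation yields
$$h=\frac{\Delta f}{\|\nabla f\|}-\frac{b'(f)}{2\sqrt{b}},$$
with a relative \emph{minus} sign between the two terms. Reorienting the unit normal (your "delicate point") replaces $N$ by $-N$ and hence flips \emph{both} terms simultaneously, so no choice of normal, and no sign convention for $h$ alone, can convert this into the relative plus sign of the lemma. What is actually needed is that the paper uses the geometers' conventions for \emph{both} quantities: $h=-\sum_i g(\nabla_{E_i}N,E_i)$ \emph{and} $\Delta f=-\sum_u g(\nabla_u\nabla f,u)=-\mathrm{div}(\nabla f)$ (the latter is visible in the paper's passage from $-\frac{1}{\|\nabla f\|}\sum_u g(\nabla_u\nabla f,u)$ to $\frac{\Delta f}{\|\nabla f\|}$, and again in the later Laplacian computation for $f=g(Z,X)$). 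Once you adopt both sign conventions, your product-rule argument gives the stated identity verbatim; with only one of them, it gives a formula off by a sign in one term.
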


\begin{proof} Let $E_i, i=1,2,...,m-1, N=\frac{\nabla f}{\|\nabla f\|}$ be an adapted orthonormal frame field, where $E_i\perp \nabla f$.
{\small
\begin{eqnarray*}h&=&-\sum_{i=1}^{m-1}g(\nabla_{E_i}N,E_i)\\
&=&-\sum_{i=1}^{m-1}g(\nabla_{E_i}N,E_i)-g(\nabla_NN,N)\\&=&-\frac{1}{\|\nabla f\|}(\sum_{i=1}^{m-1}g(\nabla_{E_i}\nabla f,E_i)+g(\nabla_N\nabla f,N))+\\&&\frac{N(\|\nabla f\|)}{\|\nabla f\|^2}g(\nabla f,N)\\
&=&\frac{1}{\|\nabla f\|}\Delta f+\frac{b'(f)}{2\sqrt b}.
\end{eqnarray*}} \hfill$\qed$
\end{proof}

\section{Harmonic unit vector fields}
Let $(M,g)$ be an m-dimensional Riemannian manifold. A unit vector field $Z$ on $M$ can be regarded as an immersion $Z\colon M\to T_1M$ of $M$ into its unit tangent bundle, which is itself a Riemannian manifold with its Sasaki metric $g_S$. In this setting, the induced metric on $M$ is given by$$Z^\ast g_S(X,Y)=g(X,Y)+g(\nabla_XZ,\nabla_YZ).$$
Denote by $A_Z$ and $L_Z$ the operators $$A_ZX=-\nabla_XZ$$ and $$L_ZX=X+A_Z^t(A_ZX).$$
The energy $E(Z)$ is given by $$E(Z)=\frac{1}{2}\int_Mtr L_ZdV_g=\frac{m}{2}Vol(M)+\frac{1}{2}\int_M\|\nabla Z\|^2dV_g$$ where $dV_g$ is the Riemannian volume form on $M$.
A critical point for the functional $E$ is called a harmonic unit vector field.

The critical point condition for $E$ have been derived in \cite{WIE}. More precisely,
$Z$ is a harmonic unit vector field on $(M,g)$ if and only if the one form $\nu_Z$, $$\nu_Z(X)=tr(u\mapsto (\nabla_uA_Z^t)X)$$ vanishes on $Z^\perp$.
Equivalently, the critical point condition for harmonic unit vector fields is again
\begin{equation}\sum_{u_i}g((\nabla_{u_i}A^t_Z)X,u_i)=0,~\forall X\in Z^\perp\label{cch}\end{equation} where the $u_i$s form an orthonormal basis.

Let $N$ be a geodesic vector field with integrable orthogonal complement $N^\perp$.
The endomorphism $A_N=-\nabla N$ is then symmetric. Indeed, for any $X,Y\perp N$,
\begin{eqnarray*}g(A_NX,Y)&=&g(-\nabla_XN,Y)\\
&=&-Xg(N,Y)+g(N,\nabla_XY)\\
&=&g(N,\nabla_YX)\\
&=&Yg(N,X)-g(\nabla_YN,X)\\
&=&g(X,A_NY)
\end{eqnarray*}

Since $A_NN=0$ and $g(A_NX,N)=0$, it follows that $A_N$ is symmetric.
Let $\lambda_i,~i=1,2,3,...m$ be the eigenvalues of $A_N$ on $N^\perp$. Let also $E_1,...,E_m$ be an orthonormal frame of $N^\perp$ consisting of eigenvectors. One has
 $$A_NN=A^t_NN=0,~A_NE_i=A^t_NE_i,~i=1,2,...,m$$  and $N$ is a harmonic unit vector field if and only if, for $j=1,...,n$
\begin{eqnarray}0=\nu_N(E_j)&=&\sum_{i=1}^mg((\nabla_{E_i}A^t_N)E_j,E_i)+g((\nabla_NA^t)E_j,N)
\end{eqnarray}
If $\tau$ is a field of symmetric endomorphisms, then so is $\nabla_E\tau$ for any $E$. We continue the above calculation
\begin{eqnarray*}
0&=&g((\nabla_{E_i}A_N)E_i,E_j)+g((\nabla_NA_N)N,E_j)\\
&=&\sum_{i=1}^mg(\nabla_{E_i}(A_NE_i),E_j)-g(A_N(\nabla_{E_i}E_i),E_j)\\
&=&\sum_{i=1}^mg(\nabla_{E_i}(\lambda_iE_i),E_j)-\sum_{i=1}^mg(\nabla_{E_i}E_i,\lambda_jE_j)\\
&=&E_j(\lambda_j)+\sum_{i=1}^m\lambda_ig(\nabla_{E_i}E_i,E_j)-\lambda_j\sum_{i=1}^mg(\nabla_{E_i}E_i,E_j)\\
&=&E_j(\lambda_j)+\sum_{i=1}^m(\lambda_i-\lambda_j)g(\nabla_{E_i}E_i,E_j)
\end{eqnarray*}
\begin{equation}0=E_j(\lambda_j)+\sum_{i=1}^m(\lambda_i-\lambda_j)g(\nabla_{E_i}E_i,E_j)\label{19}\end{equation}
On the other hand, Codazzi equations imply that
\begin{eqnarray*}g(R(E_i,E_j)E_i,N)&=&-g(R(E_i,E_j)N,E_i)\\
&=&g(\nabla_{E_i}(A_NE_j)-\nabla_{E_j}(A_NE_i)-A_N[E_i,E_j],E_i)\\
&=&E_ig(A_NE_j,E_i)-g(A_NE_j,\nabla_{E_i}E_i)-\\&&E_jg(A_NE_i,E_i)+g(A_NE_i,\nabla_{E_j}E_i)\\&&-g([E_i,E_j],A_NE_i)\\
&=&E_i(\lambda_jg(E_j,E_i)-\lambda_jg(E_j,\nabla_{E_i}E_i)-E_j(\lambda_i)+\\
&&\lambda_ig(E_i,\nabla_{E_j}E_i)-\lambda_ig([E_i,E_j],E_i)\\
&=&E_i(\lambda_j)g(E_j,E_i)+\lambda_jE_ig(E_j,E_i)-\\&&\lambda_jg(E_j,\nabla_{E_i}E_i)-
E_j(\lambda_i)-\lambda_ig([E_i,E_j],E_i)\\
&=&E_i(\lambda_j)\delta_{ij}+\lambda_jg(\nabla_{E_i}E_j,E_i)-E_j(\lambda_i)-\\&&\lambda_ig(\nabla_{E_i}E_j,E_i)
\end{eqnarray*} Summing over $i$ and using $g(R(N,E_j)N,N)=0$, we get:
\begin{eqnarray*}-\rho (E_j,N)&=&\sum_{i=1}^m(\lambda_j-\lambda_i)g(\nabla_{E_i}E_j,E_i)+E_j(\lambda_j)-E_j(\sum_{i=1}^m\lambda_i)\\
&=&\sum_{i=1}^m(\lambda_i-\lambda_j)g(\nabla_{E_i}E_i,E_j)+E_j(\lambda_j)-E_j(\sum_{i=1}^m\lambda_i)
\end{eqnarray*}
Now, combining with the previous identity (\ref{19}), one sees that
\begin{eqnarray*}-\rho (E_j,N)&=&0-E_j(\sum\lambda_i)
\end{eqnarray*}
\begin{equation}\rho (E_j,N)=E_j(\sum\lambda_i)\label{35}\end{equation}
Therefore, we see that $N$ is harmonic if and only if $$X(h)=\rho (X,N)$$ for all $X\perp N$, where $h=\sum_{i=1}^m\lambda_i$ is the mean curvature of $N^\perp$ and $\rho$ is the Ricci tensor.

For a geodesic vector field $N$, with integrable orthogonal complement, Identity (\ref{cch}) reduces to

\begin{equation}X(h)=\rho (X,N),~~\forall X\perp N\label{critical}\end{equation} where $h$ is the mean curvature of $N^\perp$ and $\rho$ is the Ricci tensor.
\section{Double K-Contact structures}

A contact metric structure on an odd-dimensional (2n+1) manifold $M$ is determined by the data of a 1-form $\alpha$ with Reeb field $Z$ together with a Riemannian metric $g$, called the adapted contact metric, and a partial complex structure $J$ such that the following identities hold:
\begin{itemize}
\item[i)] $\alpha\wedge (d\alpha )^n$ is a volume form on $M$.
\item[ii)] $J^2A=-A+\alpha (A)Z$
\item[iii)] $d\alpha (A,B)=2g(A,JB)$ for any tangent vectors $A$ and $B$.
\end{itemize}

If $Z$ is an infinitesimal isometry for $g$, then the structure is called K-contact. If in addition, the identity
$$(\nabla_A J)B=g(A,B)Z-\alpha (B)A$$ is satisfied for any two tangent vectors $A$ and $B$, then the structure is called Sasakian.
\bd
A double K-contact structure on a manifold $M$ is a pair of K-contact forms $\alpha$ and $\beta$ with same contact metric $g$ and
commuting Reeb vector fields $Z$ and $X$.
\ed

An example: On $\bf{S}^3\hookrightarrow\bf{R}^4$ with coordinates $x_1,y_1, x_2,y_2$, $x_1^2+y_1^2+x_2^2+y_2^2=1$. The standard K-contact form is $\alpha =y_1dx_1-x_1dy_1+y_2dx_2-x_2dy_2$ with Reeb field $Z=y_1\partial x_1-x_1\partial y_1+y_2\partial x_2-x_2\partial y_2$. Another K-contact form with same adapted metric is $\beta =-y_1dx_1+x_1dy_1+y_2dx_2-x_2dy_2$ with Reeb field $X=-y_1\partial x_1+x_1\partial y_1+y_2\partial x_2-x_2\partial y_2$.

$[X,Z]=0$ and the angle function $g(X,Z)=-y_1^2-x_1^2+y_2^2+x_2^2$ is isoparametric. Its gradient vector field is \begin{eqnarray*}2JX&=&2((-x_1\partial x_1-y_1\partial y_1+x_2\partial x_2+y_2\partial y_2)+\\&&2(x_1^2+y_1^2x_2^2-y_2^2)(x_1\partial x_1+y_1\partial y_1+x_2\partial x_2+y_2\partial y_2).\end{eqnarray*} $J$ is the standard partial complex structure on $\bf{S}^3$.

$N=\frac{2JX}{\|2JX\|}$ is a harmonic unit vector field as it will follow from results in the following sections. Similar examples as the above one can be repeated on any odd dimensional unit sphere.

\bp In the case of a double K-contact structure $(M,\alpha ,Z,\beta ,X)$, the angle function $f=g(X,Z)$ is always transnormal.\ep

 \begin{proof}Let's denote by $J$ and $\phi$ the respective complex structures on the contact sub-bundles. Then, the gradient of $f$ is given by
 $$\nabla f=2JX=2\phi Z.$$ Its norm square is therefore $$\|\nabla f\|^2=2\|JX\|^2=4(1-g(X,Z)^2)=4(1-f^2)=b(f)$$ with $b(t)=4(1-t^2).$ \hfill$\qed$ \end{proof}

 \begin{Lemma} The Lapacian of the transnormal function $f=g(Z,X)$ satisfies:
 $$\Delta f=(4n+4)f+2\sum_{i=1}^{2n-2}g(J\phi E_i,E_i),$$ where $E_i$ are orthonormal and each is perpendicular to $Z$, $X$, and $N$.
 \end{Lemma}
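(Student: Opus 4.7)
The plan is to apply the Bochner formula for Killing vector fields to $V=Z+tX$ and extract the $t$-linear term, then invoke the classical K-contact Ricci identity. Both $Z$ and $X$ are Killing (as K-contact Reeb vectors), so $\nabla_AZ=-JA$ and $\nabla_AX=-\phi A$. Since $|Z|=|X|=1$ and $g(Z,X)=f$, $|Z+tX|^2=1+2tf+t^2$, and the Bochner identity $\frac{1}{2}\mathrm{tr}\,\mathrm{Hess}(|V|^2)=|\nabla V|^2-\mathrm{Ric}(V,V)$ yields, at order $t$,
$$\mathrm{tr}\,\mathrm{Hess}(f)=2\langle\nabla Z,\nabla X\rangle-2\mathrm{Ric}(Z,X).$$

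The cross term simplifies: $\langle\nabla Z,\nabla X\rangle=\sum_\alpha g(Je_\alpha,\phi e_\alpha)=-\sum_\alpha g(J\phi e_\alpha,e_\alpha)$, by skew-symmetry of $J$. Evaluate this last trace on the adapted orthonormal frame $\{Z,\tilde X,N,E_1,\ldots,E_{2n-2}\}$, where $\tilde X=(X-fZ)/\sqrt{1-f^2}$ and $N=\nabla f/\|\nabla f\|$. Using $JX=\phi Z$ (which follows from $[Z,X]=0$ combined with the two K-contact identities), $J^2=-I+\alpha\otimes Z$, $\phi^2=-I+\beta\otimes X$, $\alpha(X)=\beta(Z)=f$, and $JZ=\phi X=0$, a short computation gives $g(J\phi Z,Z)=0$, $g(J\phi\tilde X,\tilde X)=f$, and $g(J\phi N,N)=f$; summing,
$$\sum_\alpha g(J\phi e_\alpha,e_\alpha)=2f+\sum_{i=1}^{2n-2}g(J\phi E_i,E_i).$$

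Finally, invoke the classical K-contact Ricci identity $\mathrm{Ric}(Z,\cdot)=2n\,\alpha$, so $\mathrm{Ric}(Z,X)=2nf$. Combining the pieces, and using the paper's positive-spectrum convention $\Delta=-\mathrm{tr}\,\mathrm{Hess}$ (consistent with the $S^3$ sanity check, where the stated formula reduces to $\Delta f=8f$, the correct eigenvalue of the degree-$2$ spherical harmonic $f$), we obtain
$$\Delta f=(4n+4)f+2\sum_{i=1}^{2n-2}g(J\phi E_i,E_i).$$
The chief obstacle is invoking the K-contact Ricci identity $\mathrm{Ric}(Z,\cdot)=2n\alpha$; the remainder is routine adapted-frame bookkeeping.
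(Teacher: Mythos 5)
Your argument is correct, and it reaches the formula by a genuinely different route than the paper. The paper computes $\Delta f=-\mathrm{div}(2JX)$ directly in an adapted frame, converting each term $g(\nabla_{E_i}(JX),E_i)$ via a K-contact curvature identity into $g(R(Z,E_i)X,E_i)-g(J\phi E_i,E_i)$ and then summing to produce $2\,\mathrm{Ricci}(Z,X)+2\,\mathrm{tr}(J\phi)$. You instead polarize the Bochner identity $\tfrac12\,\mathrm{tr}\,\mathrm{Hess}(|V|^2)=|\nabla V|^2-\mathrm{Ric}(V,V)$ over the family of Killing fields $V=Z+tX$, which delivers $\mathrm{tr}\,\mathrm{Hess}(f)=2\langle\nabla Z,\nabla X\rangle-2\,\mathrm{Ric}(Z,X)$ with no contact-specific curvature lemma at all; the only structure-specific inputs are $\nabla_AZ=-JA$, $\nabla_AX=-\phi A$ (which do follow from condition (iii) together with $Z$, $X$ Killing), $QZ=2nZ$, and the relation $JX=\phi Z$ coming from $[Z,X]=0$. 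Both proofs then reduce to the same frame bookkeeping: your values $g(J\phi Z,Z)=0$, $g(J\phi\tilde X,\tilde X)=f$, $g(J\phi N,N)=f$ agree with the paper's terms $2\alpha(X)g(N,N)+2\alpha(X)g(JN,JN)=4f$, and the sign conventions match (both use the positive-spectrum Laplacian, confirmed by your $S^3$ check). What your approach buys is transparency and standardness --- the Bochner formula for Killing fields replaces the less familiar K-contact identity relating $\nabla(JX)$ to $R(Z,\cdot)X$, and the cross-term $\langle\nabla Z,\nabla X\rangle=-\mathrm{tr}(J\phi)$ makes it conceptually clear why the trace of $J\phi$ appears; what it costs is that the curvature term arrives packaged as $\mathrm{Ric}(Z,X)$ rather than being visible termwise.
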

 \begin{proof}
 Let $E_i\perp Z,X,JX,~for~i-1,...,2n-2$, $E_{2n-1}=Z, E_{2n}=\frac{X-\alpha (X)Z}{\sqrt{1-\alpha^2 (X)}}, N=\frac{JX}{\sqrt{1-\alpha^2(X)}}$ be an orthonormal frame filed.
 \begin{eqnarray*}\Delta f&=&-2(\sum_{i=1}^{2n}g(\nabla_{E_i} JX,E_i)+g(\nabla_NJX,N))\\
 &=&-2(\sum_{i=1}^{2n}g(R(Z,E)X,E)-g(J\phi E_i,E_i)+g(R(Z,N)X,N)-g(J\phi N,N))\\
 &=&2Ricci (Z,X)+2\sum_{i=1}^{2n} g(J\phi E_i,E_i)+2g(J\phi N,N)\\
 &=&2Ricci(Z,X)+2\sum_{i=1}^{2n-2}g(J\phi E_i,E_i)+2\alpha (X)g(N,N)+2g(J\phi N,N)\\
&=&2Ricci(Z,X)+2\sum_{i=1}^{2n-2}g(J\phi E_i,E_i)+2\alpha (X)g(N,N)+2\alpha (X)g(JN,JN)\\
 &=&(2(2n)+4)g(X,Z)+2\sum_{i=1}^{2n-2} g(J\phi E_i,E_i)\\
 &=&(4n+4)g(X,Z)+2\sum_{i=1}^{2n-2}g(J\phi E_i,E_i)
 \end{eqnarray*}
 \begin{equation}\Delta f=(4n+4)g(X,Z)+2\sum_{i=1}^{2n-2}g(J\phi E_i,E_i).\label{laplace}
 \end{equation}\hfill$\qed$
\end{proof}
\section{Double K-contact structures in dimensions 3 and 5}
A transnormal function doesn't have to be isoparametric except in some particular cases. One of these cases is the angle function of double
K-contact structures in lower dimensions.
\begin{Theorem} Let $(M,\alpha ,Z,\beta ,X,g)$ be a double K-contact structure on a closed 3-dimensional or 5-dimensional manifold $M$.
Then the transnormal angle function $f=g(X,Z)$ is isoparametric.
\end{Theorem}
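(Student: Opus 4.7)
The plan is to apply the formula
\[
\Delta f = (4n+4)f + 2\sum_{i=1}^{2n-2} g(J\phi E_i, E_i)
\]
from the preceding Lemma and to show that, in dimensions $3$ and $5$, the remaining sum reduces on the regular set of $f$ to a locally constant function; a short topological argument will then upgrade this to a true constant, yielding the isoparametric conclusion.

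The three-dimensional case ($n=1$) is immediate: the sum is vacuous, so $\Delta f = 8f$, and one may take $a(t)=8t$.

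The five-dimensional case ($n=2$) is the substantial one. Here $E_1, E_2$ span the two-plane $V$ orthogonal to $Z$, $X$, and $JX$, and the task is to pin down $\sum_{i=1}^{2} g(J\phi E_i, E_i)$. The first step is to verify that $V$ is invariant under both $J$ and $\phi$. For $J$, this reduces to short computations using $JZ=0$, antisymmetry of $J$ with respect to $g$, and the contact metric identity $g(JA,JB)=g(A,B)-\alpha(A)\alpha(B)$. For $\phi$ the analogous argument uses $\phi X=0$, $g(\phi A,\phi B)=g(A,B)-\beta(A)\beta(B)$, and crucially the identity $\phi Z = JX$ already recorded in the transnormality proof. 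Since $V$ is a real two-dimensional Euclidean space carrying two orthogonal almost complex structures $J|_V$ and $\phi|_V$ (each squaring to $-I$ because $\alpha(Y)=\beta(Y)=0$ for $Y\in V$), and since on an oriented real two-plane there are exactly two such structures, one must have $J|_V = \epsilon(p)\,\phi|_V$ with $\epsilon(p)\in\{-1,+1\}$. Consequently $J\phi|_V = -\epsilon(p) I$, the sum equals $-2\epsilon(p)$, and $\Delta f = 12f - 4\epsilon(p)$ on the regular set of $f$.

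To close the argument, I would show that $\epsilon$ is globally constant. The function $\Delta f - 12f$ is continuous on the closed manifold $M$; on the open dense regular set it takes values in the two-point set $\{-4,+4\}$, so by continuity and connectedness of $M$ it must be identically $-4$ or identically $+4$. Hence $\Delta f = 12f + c$ for a constant $c \in \{-4,+4\}$, proving $f$ is isoparametric. The step I expect to require the most care is the verification that $V$ is $\phi$-invariant---in particular the perpendicularity $\phi E_i \perp JX$, which rests on $\phi Z = JX$---together with the density claim for the regular set, for which one appeals to standard transnormal theory to know that the critical set (the loci $\{X=Z\}$ and $\{X=-Z\}$) has codimension at least two.
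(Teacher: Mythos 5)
Your proposal follows essentially the same route as the paper: in dimension $3$ the sum in Identity (\ref{laplace}) is vacuous, and in dimension $5$ the key point is that $J$ and $\phi$ restrict to orthogonal complex structures on the residual two-plane orthogonal to $\{Z,X,JX\}$, forcing $J=\pm\phi$ there and hence $\Delta f=12f\pm 4$. Your additional care in verifying the $J$- and $\phi$-invariance of that plane and in showing the sign is globally constant (via continuity of $\Delta f-12f$ and density of the regular set) only makes explicit what the paper leaves implicit, and is correct.
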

\begin{proof} In dimension 3, Identity (\ref{laplace}) reduces to $$\Delta f=8g(X,Z)=8f.$$ In dimension 5, Identity (\ref{laplace}) reduces to $$\Delta f=12g(X,Z)+\sum_{i=1}^2g(J\phi E_i,E_i)=12f\pm 4$$ since in this case one has $J=\pm \phi$ on the orthogonal complement of $\{Z,X, JX\}$.$\qed$
\end{proof}
\section{Harmonic Unit vector fields in K-contact geometry}
On any Riemannian manifold $(M,g)$, with Ricci tensor $\rho$, one defines the Ricci endomorphism $Q$ by $$\rho (A,B)=g(QA,B).$$ If $(M,g, \alpha ,Z,J)$ is a K-contact structure on $M$, then one has
\begin{equation}QZ=2nZ.\label{ricci}\end{equation} If the K-contact structure is Sasakian one has also the following identity:
\begin{equation}QJ=JQ,\label{commute}\end{equation} that is, the Ricci endomorphism commutes with the transverse complex structure. (See \cite{BLA} for these and more identities on K-contact structures.)
\begin{Lemma}\label{lemma4} On a double K-contact manifold $(M,\alpha ,Z,\beta , X)$, suppose that one of the contact forms, say $\alpha$, is Sasakian. Then one has $\phi J=J\phi$ on the subbundle orthogonal to $\{Z,X, JX=\phi Z\}$. Moreover, $\phi J$ is symmetric and its only eigenvalues are $\pm 1$.
\end{Lemma}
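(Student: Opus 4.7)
The plan is to first establish the pointwise identity $\mathcal L_X J = 0$ and then to expand this Lie derivative in a second way that exposes the commutator $\phi J - J\phi$.

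To obtain $\mathcal L_X J = 0$, I would exploit that $X$ is Killing and therefore preserves the Levi-Civita connection, so that for any vector field $A$,
\[
\mathcal L_X(\nabla_A Z) \;=\; \nabla_{[X,A]}Z + \nabla_A[X,Z].
\]
The double K-contact hypothesis gives $[X,Z]=0$, and the K-contact identity for $\alpha$ reads $\nabla_A Z = -JA$. Substituting yields $-[X,JA]=-J[X,A]$, i.e.\ $(\mathcal L_X J)A=0$ for every vector field $A$.

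Next, I would re-derive $(\mathcal L_X J)A$ from the bracket definition, substituting the K-contact identity $\nabla_B X = -\phi B$ for $\beta$ into $[X,JA]= \nabla_X(JA)+\phi JA$ and $[X,A] = \nabla_X A+\phi A$. This produces
\[
(\mathcal L_X J)A \;=\; (\nabla_X J)A + (\phi J - J\phi)A.
\]
Combining the two expressions for $\mathcal L_X J$ forces $(\phi J-J\phi)A = -(\nabla_X J)A$, and the Sasakian hypothesis on $\alpha$ gives $(\nabla_X J)A = g(X,A)Z - \alpha(A)X$. Restricting to $A$ in the subbundle orthogonal to $\{Z,X,JX\}$, both $g(X,A)$ and $\alpha(A) = g(Z,A)$ vanish, so $(\nabla_X J)A=0$ and hence $\phi J = J\phi$ on that subbundle.

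The remaining statements are then formal. Skew-adjointness of $J$ and $\phi$ relative to $g$ gives $(\phi J)^t = J^t\phi^t = (-J)(-\phi) = J\phi$, which equals $\phi J$ by what was just proved, so $\phi J$ is symmetric. Because $\alpha$ and $\beta$ both vanish on this subbundle, $J^2=\phi^2=-\mathrm{id}$ there, so $(\phi J)^2 = \phi^2 J^2 = \mathrm{id}$, confining the eigenvalues to $\pm 1$. The main obstacle is noticing that the two K-contact structures interact precisely through $\mathcal L_X J = 0$; the Sasakian hypothesis then enters only at the very last step to kill the $(\nabla_X J)A$ term on the prescribed subbundle.
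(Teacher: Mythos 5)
Your argument is correct, but it reaches the commutativity by a genuinely different mechanism than the paper. The paper computes the Hessian of the angle function $f=g(X,Z)$, namely $\mathrm{Hess}_f(A,B)=2g(A,X)g(Z,B)-2\alpha (X)g(A,B)-2g(J\phi A,B)$ via the Sasakian identity, and extracts the symmetry of $J\phi$ on ${\cal H}$ from the symmetry of the Hessian; commutativity then follows from skew-adjointness of $J$ and $\phi$. You instead prove $\mathcal L_XJ=0$ from the facts that $X$ is Killing, $[X,Z]=0$ and $\nabla_AZ=-JA$, expand the same Lie derivative through $\nabla_BX=-\phi B$ to get $(\phi J-J\phi)A=-(\nabla_XJ)A$, and only then invoke the Sasakian identity; commutativity comes first and symmetry follows from skew-adjointness. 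The two routes use the same two structural inputs (the Sasakian formula for $\nabla J$ and the K-contact formula $\nabla X=-\phi$), but yours buys a little more: the identity $(\phi J-J\phi)A=\alpha (A)X-g(X,A)Z$ holds for \emph{all} $A$, not just on ${\cal H}$, and it isolates exactly where the Sasakian hypothesis enters, whereas the paper's route has the advantage of reusing the Hessian computation that it needs anyway for the isoparametricity discussion. One small point you share with the paper: the final step $(\phi J)^2=\phi^2J^2=\mathrm{Id}$ tacitly uses that ${\cal H}$ is invariant under $J$ and $\phi$ (so that the commutation relation may be applied at $JA$ and so that $\phi J$ restricts to an endomorphism of ${\cal H}$); this is immediate from $JZ=0$, $\phi X=0$ and $JX=\phi Z$, but it deserves a sentence.
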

\begin{proof} Let's denote by ${\cal H}$ the tangent sub-bundle orthogonal complement of $\{Z,X,JX\}$. It is easily seen that ${\cal H}$ is $\phi$ and $J$ invariant. The gradient $\nabla f$ of $f=g(X,Z)$ is given by $\nabla f=2JX$.
The Hessian, $Hess_f$ is given by
$$Hess_f(A,B)=(\nabla df)(A,B)=g(\nabla_A\nabla f, B)=2g(\nabla_A(JX),B).$$

Ultimately, using Sasakian identities like $(\nabla_UJ)V=g(U,V)Z-\alpha (V)U$, one obtains:
\begin{equation}Hess_f(A,B)=2g(A,X)g(Z,B)-2\alpha (X)g(A,B)-2g(J\phi A,B).\label{hess}\end{equation}

Since $Hess_f(.,.)$ is a symmetric bilinear form, one deduce from Identity (\ref{hess}) that for any two sections $A$ and $B$ of ${\cal H}$,
$$g(J\phi A,B)=g(J\phi B,A)$$ that is $J\phi$ is a symmetric endomorphism of ${\cal H}$. Moreover, $$g(J\phi A,B)=g(J\phi B,A)=g(B,\phi JA),$$ which means that $\phi$ and $J$ commute on ${\cal H}$.

As a consequence, $$(\phi J)^2=\phi J\phi J=\phi J^2\phi =Id$$ on ${\cal H}$ and hence the only eigenvalues are $\pm 1$. \hfill$\qed$
\end{proof}
\begin{Theorem} Let $(M,\alpha ,Z,\beta , X)$ be a double K-contact structure on a 2n+1 -dimensional closed manifold $M$ with one of the contact forms, say $\alpha$, Sasakian.
Then the function $f=g(Z,X)$ is isoparametric and the vector field $N=\frac{\nabla f}{\|\nabla f\|}$ is a harmonic unit vector field.
\end{Theorem}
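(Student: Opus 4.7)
The plan is to prove the theorem in two stages: first show that $f=g(Z,X)$ is isoparametric, and then verify that $N=\nabla f/\|\nabla f\|$ satisfies the harmonic criterion (\ref{critical}).

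For isoparametricity, I would exploit Lemma \ref{lemma4}. On the regular set of $f$ the subbundle ${\cal H}$ orthogonal to $\{Z,X,JX\}$ has constant rank $2n-2$, and $J\phi|_{\cal H}$ is a symmetric involution with eigenvalues $\pm 1$. The trace $\sum_{i=1}^{2n-2}g(J\phi E_i,E_i)$ is therefore integer-valued and continuous on each connected component of the regular set, hence locally constant; call the constant $c$. Formula (\ref{laplace}) then collapses to $\Delta f=(4n+4)f+2c$, so taking $a(t)=(4n+4)t+2c$ realises (\ref{22}). As both sides are smooth in $f$, the identity persists across the critical set by continuity.

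To establish harmonicity of $N$, I would invoke (\ref{critical}). The first proposition on transnormal functions supplies that $N$ is geodesic on the regular set, and $N^\perp$ is integrable as it is tangent to the regular level sets of $f$. Combining the isoparametric identity with Lemma \ref{l1} shows that the mean curvature $h=a(f)/\sqrt{b(f)}+b'(f)/(2\sqrt{b(f)})$ is a function of $f$ alone, so $Y(h)=h'(f)\,g(\nabla f,Y)=0$ for every $Y\perp N$. It remains to check $\rho(Y,N)=0$ on $N^\perp$. Writing $N=JX/\sqrt{1-f^2}$ and using symmetry of $Q$, the Sasakian commutation $QJ=JQ$ from (\ref{commute}), and the K-contact identity $QX=2nX$ from (\ref{ricci}) applied to the Reeb field $X$ of $\beta$, one computes
$$\rho(Y,N)=\frac{1}{\sqrt{1-f^2}}g(Y,QJX)=\frac{1}{\sqrt{1-f^2}}g(Y,JQX)=\frac{2n}{\sqrt{1-f^2}}g(Y,JX)=2n\,g(Y,N),$$
which vanishes for $Y\perp N$. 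This verifies (\ref{critical}).

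The main obstacle is the first step: the trace of $J\phi|_{\cal H}$ is a priori a point-dependent function, and without some rigidity it would generally fluctuate and kill isoparametricity. Lemma \ref{lemma4} is exactly the mechanism that pins it down, and the Sasakian hypothesis on $\alpha$ enters the argument twice in essential but distinct ways: once through Lemma \ref{lemma4} to make $J\phi|_{\cal H}$ an involution with integer trace, and once through (\ref{commute}) so that $Q$ intertwines $J$ and the Ricci computation collapses to a scalar multiple of $g(Y,N)$.
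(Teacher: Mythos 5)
Your proposal is correct and follows essentially the same route as the paper: Lemma \ref{lemma4} turns the trace term in (\ref{laplace}) into a sum of $\pm 1$'s so that $\Delta f$ becomes a function of $f$, and harmonicity is checked via (\ref{critical}) using $E(h)=0$ together with $QJ=JQ$ and $QX=2nX$ to get $\rho(Y,N)=2n\,g(Y,N)=0$. Your remark that the trace of $J\phi|_{\cal H}$ is locally constant because it is a continuous integer-valued function is a small but welcome sharpening of a point the paper leaves implicit.
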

\begin{proof}

Isoparametricity holds thanks to the fact that on the distribution orthogonal to \{$Z$, $X$ $JX$\}, the endomorphisms $J$ and $\phi$ commute by Lemma \ref{lemma4}. Using an orthonormal basis of eigenvectors of $J\phi$, Identity (\ref{laplace}) reduces to $$\Delta f=(4n+4)g(X,Z)+2\sum_{i=1}^{2n-2}g(\pm E_i,E_i)=(4n+4)f+2\sum_{i=1}^{2n-2}(\pm 1).$$
Harmonicity follows from the critical point condition for a geodesic unit vector field with integrable orthogonal complement (see Identity (\ref{critical})).

$$E(h)=\rho (E,N)$$ for any $E\perp N$. From the isoparametric condition, $h$ is a function of $f$, hence $E(h)=0$. But also, using identities (\ref{ricci}) and (\ref{commute}), \begin{eqnarray*}\rho (E,N)&=&\rho (E, \frac{JX}{\|JX\|})=\frac{1}{\|JX\|}g(E,QJX)=\frac{1}{\|JX\|}g(E,JQX)\\&=&\frac{2n}{\|JX\|}g(E,JX)=0.\end{eqnarray*}\hfill $\qed$

\end{proof}


\begin{thebibliography}{0}
\bibitem{WAN} Wang, Q.M., {\em Isoparametric Functions on Riemannian Manifolds. I,} Math. Ann. {\bf 277} (1987), 639-646.
\bibitem{BLA} Blair, D., {\em Riemannian Geometry of Contact and
Symplectic Manifolds,} Progress in Mathematics {\bf 203}, Birkh\"auser,
Boston. Basel. Berlin, 2002.
\bibitem{BOV} Boeckx, E. and Vanhecke, L., {\em Harmonic and minimal radial vector fields,} Acta Math. Hung. {\bf 90} (2001), 317-331.
\bibitem{BVA} Boeckx, E. and Vanhecke, L., {\em Isoparametric functions and harmonic and minimal unit vector fields}, Contemporary Mathematics {\bf 288} (2001), 20-31.
\bibitem{DRR} Draghici, T. and Rukimbira, P., {\em Sasakian metrics with an additional contact structure,} Afr. Diaspora J. Math. {\bf 14} (2012), no.2, pp.118-133.
\bibitem{KAS} Kashiwada, T., {\em A note on a riemannian space with
sasakian 3-structure,} Natural Sci. Rep. Ochonomizu Univ., {\bf 22}
(1975), 1-2.
\bibitem{RU1} Rukimbira, P., {\em Topology and closed characteristics
of K-contact maniflods,} Differential Geometry and its
Applications, Proceedings of the 7th International Conference,
Brno, August 10-14, 1998. Pages 399-412.
\bibitem{TAY} Tachibana, S. and Yu, W. N., {\em On a riemannian
space admitting more than one Sasakian structure,} T\^ ohoku Math. J. {\bf 22}(1970), 536-540.
\bibitem{WIE} Wiegmink, G., {\em The total bending of vector fields on Riemannian manifolds}, Math. Ann. {\bf 303}(1995), 325-344.
\end{thebibliography}
\end{document}